\documentclass[11pt]{article}

\usepackage[margin=1in]{geometry}
\usepackage[T1]{fontenc}
\usepackage{lmodern}
\usepackage{amsmath,amssymb,amsthm}
\usepackage{siunitx}
\usepackage{booktabs}
\usepackage{caption}
\usepackage{flafter}
\usepackage[section]{placeins}
\usepackage{graphicx}
\usepackage{xcolor}
\usepackage{algorithm}
\usepackage{algpseudocode}
\usepackage[numbers,sort&compress]{natbib}
\usepackage{hyperref}
\hypersetup{hidelinks}
\usepackage[capitalise]{cleveref}

\graphicspath{{figures/}}
\newtheorem{lemma}{Lemma}

\newcommand{\rgcx}{RGCX}
\newcommand{\strgcx}{single-temperature RGCX}
\newcommand{\Strgcx}{Single-temperature RGCX}
\newcommand{\ttsfixed}{fixed-length TTS99}
\newcommand{\ttsrenewal}{renewal TTS99}
\newcommand{\AUTHORS}{%
    Hao Geng\textsuperscript{1}, 
    Nan Su\textsuperscript{2}, 
    Gabriele Cavallaro\textsuperscript{1,2}%
}
\newcommand{\AFFILIATIONS}{%
  {\small\textsuperscript{1}Faculty of Electrical and Computer Engineering, University of Iceland, Reykjavik, Iceland} \\[4pt]
  {\small\textsuperscript{2}J{\"u}lich Supercomputing Centre, Forschungszentrum J{\"u}lich, J{\"u}lich, Germany} \\
}

\newcommand{\Rrep}{96}
\newcommand{\kint}{8}
\newcommand{\Tpaper}{0.23}
\newcommand{\startGeightyone}{150}
\newcommand{\startGseventytwo}{100}
\newcommand{\startOther}{50}
\newcommand{\desktopCPU}{Intel Xeon W-1370P}
\newcommand{\desktopClock}{3.6}
\newcommand{\GeightyoneLstar}{680}
\newcommand{\GeightyoneTTS}{12.14}
\newcommand{\GeightyoneTTSlo}{11.26}
\newcommand{\GeightyoneTTShi}{13.08}
\newcommand{\GeightyonePodd}{0.838}
\newcommand{\GseventysevenTTS}{1.93}
\newcommand{\GseventytwoTTS}{3.12}
\newcommand{\GsixtytwoTTS}{2.24}
\newcommand{\GeightyoneMedianHit}{2.54}
\newcommand{\GeightyoneQninety}{6.09}
\newcommand{\VoneDeviation}{1.1}
\newcommand{\exchangeShareLo}{71}
\newcommand{\exchangeShareHi}{74}
\newcommand{\certifiedCount}{9}
\newcommand{\boundGapLo}{1}
\newcommand{\boundGapHi}{4}
\newcommand{\boundGapGfifty}{120}
\newcommand{\GseventysevenLstar}{376}
\newcommand{\GseventysevenPodd}{0.984}
\newcommand{\GseventysevenTTSlo}{1.74}
\newcommand{\GseventysevenTTShi}{2.14}
\newcommand{\GseventysevenMedianHit}{0.90}
\newcommand{\GseventytwoLstar}{400}
\newcommand{\GseventytwoPodd}{0.858}
\newcommand{\GseventytwoTTSlo}{2.88}
\newcommand{\GseventytwoTTShi}{3.36}
\newcommand{\GseventytwoMedianHit}{0.72}
\newcommand{\GsixtytwoLstar}{320}
\newcommand{\GsixtytwoPodd}{0.810}
\newcommand{\GsixtytwoTTSlo}{2.08}
\newcommand{\GsixtytwoTTShi}{2.42}
\newcommand{\GsixtytwoMedianHit}{0.41}
\newcommand{\GeightyoneTTSfresh}{12.28}
\newcommand{\GeightyoneTTSfreshlo}{10.54}
\newcommand{\GeightyoneTTSfreshhi}{14.26}
\newcommand{\EtwoRuns}{512}
\newcommand{\EtwoSmallLo}{0.007}
\newcommand{\EtwoLargeHi}{0.83}
\newcommand{\cosmGeightyone}{1326.1}
\newcommand{\cosmGseventyseven}{34.7}
\newcommand{\cosmGseventytwo}{12.9}
\newcommand{\zickGeightyone}{87600}
\newcommand{\zickGseventyseven}{5320}
\newcommand{\zickGseventytwo}{1650}
\newcommand{\gfGeightyone}{14651}
\newcommand{\gfGseventyseven}{287}
\newcommand{\gfGseventytwo}{147}
\newcommand{\gfGsixtytwo}{72}
\newcommand{\abRuns}{256}
\newcommand{\fixedRS}{1.69}
\newcommand{\fixedRSlo}{1.30}
\newcommand{\fixedRShi}{2.29}
\newcommand{\fixedWall}{1.74}
\newcommand{\fixedWalllo}{1.36}
\newcommand{\fixedWallhi}{2.42}
\newcommand{\ascRS}{5.9}
\newcommand{\ascWall}{6.7}
\newcommand{\qkeepRS}{9.3}
\newcommand{\qkeepWall}{7.9}
\newcommand{\qkeepHits}{103}
\newcommand{\parentRS}{3.78}
\newcommand{\parentWall}{2.65}
\newcommand{\parentGseventytwoRS}{1.02}
\newcommand{\parentGseventytwoWall}{0.67}
\newcommand{\GeightyoneOpt}{14060}
\newcommand{\priorBestLo}{13992}
\newcommand{\priorBestHi}{14058}
\newcommand{\cosmRatioGseventyseven}{18}
\newcommand{\cosmRatioGseventytwo}{4.1}

\newcommand{\mcuMedian}{31}
\newcommand{\mcuPninety}{70}
\newcommand{\mcuTTS}{124}
\newcommand{\mcuTTSlo}{106}
\newcommand{\mcuTTShi}{144}
\newcommand{\mcuClock}{64}
\newcommand{\mcuArena}{192}
\newcommand{\mcuLstar}{440}
\newcommand{\mcuBoardLo}{17}
\newcommand{\mcuBoardHi}{93}
\newcommand{\mcuModelLo}{0.6}
\newcommand{\mcuModelHi}{0.9}

\newcommand{\mcuHits}{496}

\newcommand{\clusterCores}{64}
\newcommand{\clusterClock}{2.25}
\newcommand{\mainRunLimit}{4000}
\newcommand{\abRunLimit}{10000}
\newcommand{\abBudgetRatio}{2.5}
\newcommand{\cosmRatioLowerGeightyone}{100}

\title{Single-Temperature Ranked Greedy Cluster Exchange for Max-Cut on Two-Dimensional Tori}
\author{\AUTHORS \\[6pt] \AFFILIATIONS}
\date{}

\begin{document}
\maketitle

\begin{abstract}

Ground-state search in two-dimensional Ising spin glasses on large toroidal lattices remains challenging for local-update heuristics because rugged energy barriers impede relaxation across metastable states.
Equivalent to zero-field toroidal Max-Cut problems, these spin-glass instances also serve as standard benchmarks for optimization heuristics.
Many Monte Carlo approaches combine single-spin flips or cluster moves with temperature ladders or annealing schedules, yet even with tuned parameters, rarely achieve rapid, reliable ground-state convergence at scale.
Here, we address the need for faster ground-state search through cooperation between replicas. We introduce ranked greedy cluster exchange (\rgcx) and combine it with fixed-temperature Metropolis sweeps to form \strgcx.
Replicas are ranked by cut to determine the pair-processing order. Each pair deterministically exchanges every disagreement cluster that raises the better replica’s cut, retaining the complement and conserving the pair’s summed cut. 
On the 20,000-spin Gset instance G81, the estimated runtime to reach the certified optimum with 99\% success probability (TTS99) was \num[round-mode=places,round-precision=1]{\GeightyoneTTS} s on an 8-core desktop computer.
With the same population size, exchange interval and temperature, the method reached the best known cuts on all 17 toroidal Gset graphs, including \certifiedCount{} certified optima.
These findings advance ground-state search on large two-dimensional tori by establishing single-temperature RGCX as an efficient population-based method that combines ranked greedy cluster exchange with fixed-temperature Metropolis dynamics.
\end{abstract}

\section{Introduction and related work}
\label{sec:intro}

The Max-Cut problem seeks a partition of a graph's vertex set that maximizes the total weight of the edges crossing the cut.
On a two-dimensional nearest-neighbor lattice with edge weights $\pm1$, maximizing the cut is equivalent to finding a ground state of the zero-field Edwards--Anderson Ising model~\cite{edwards1975theory,barahona1988application}.
Such lattices are a standard test for both optimization heuristics and Ising machines~\cite{mohseni2022ising}.
The Gset collection~\cite{helmberg2000spectral} contains 17 toroidal lattices of degree four, 14 of them with weights $\pm1$ and three with unit weights, from $N=800$ to $N=\num{20000}$ vertices.

For the largest of them, G81, earlier studies reported cuts below the value now known to be optimal.
These range from \num{\priorBestLo} for discrete simulated bifurcation~\cite{goto2021high} to \num{\priorBestHi} for the mixed-integer solver Gurobi in a recent comparison~\cite{kuroki2026annealed}, with other methods in between~\cite{benlic2013breakout,shylo2015teams,goudet2024large}.
The value \num{\GeightyoneOpt} was first reached in 2025~\cite{zick2025performance}, and in 2026 upper bounds proved it optimal~\cite{glass2026maximum}.
Published time-to-solution (TTS) values to reach this optimum range from \SI{\cosmGeightyone}{\second}, computed for a 32-core server~\cite{zick2026cosm}, to about a day~\cite{zick2025performance}.

Monte Carlo approaches to such problems typically combine single-spin flips or cluster moves with a ladder of temperatures or an annealing schedule~\cite{zhu2015efficient,aadit2026programmable}.
The ladder and the schedule introduce additional parameters, including the number and spacing of the temperatures and the length of the schedule, which are tuned to the problem at hand~\cite{katzgraber2006feedback,wang2015comparing}.

Recombining two configurations by selecting clusters from either parent has a long history.
Kawashima and Suzuki compared replicas and flipped each cluster of their disagreement set that lowered the energy, at zero temperature~\cite{kawashima1992replica}.
Iterative partial transcription improves one state by copying favorable clusters from others, inside a thermal search~\cite{mobius1999combinatorial}.
Houdayer's move picks a random site of the disagreement set of two replicas at the same temperature and exchanges the cluster that contains it~\cite{houdayer2001cluster}, and isoenergetic cluster moves extend the use of such updates to spin glasses beyond two dimensions~\cite{zhu2015efficient}.
Houdayer and Martin searched for ground states with a population whose offspring had to improve on their parents~\cite{houdayer2001hierarchical}.
In evolutionary computation, partition crossover returns the component-wise best child of two parents~\cite{tinos2015partition,whitley2023partition}, while the Mixing Genetic Algorithm additionally retains the complementary child, preserving the population's mean objective value during recombination, and organizes pairings so that better solutions tend to meet~\cite{varadarajan2019massively}.
Houdayer moves have also been adopted as crossover in a quantum-assisted genetic algorithm~\cite{king2019quantum}.
Generalized Houdayer moves allow combinations of disagreement clusters~\cite{vandenbroucque2022houdayer}.
The same study discusses single-temperature populations with local updates, as well as all-pairs crossover in a genetic algorithm.
For samples from quantum annealers, Houdayer moves were used to improve diversity~\cite{ochoa2019feeding}, and greedy choices of disagreement components reduce a set of samples pair by pair to one improved sample~\cite{dorband2018method,ayanzadeh2021multiqubit}.
Ochoa, Schuetz and Katzgraber applied Houdayer-based post-processing to samples from Rydberg atom arrays and proposed retaining only the better member of each exchanged pair as a possible optimization strategy, which they did not test~\cite{ochoa2026efficient}.

In this paper, we propose single-temperature ranked greedy cluster exchange (\strgcx) to couple a population of fixed-temperature Metropolis replicas through our ranked greedy cluster exchange (\rgcx) operator.
For an aligned pair, \rgcx's greedy rule produces the best cluster-wise recombination, as in partition crossover~\cite{tinos2015partition,whitley2023partition}.
It retains the complementary worst child as well, conserving the pair's summed cut~\cite{varadarajan2019massively,whitley2023partition}.
Each exchange call ranks the replicas by cut and processes all pairs in lexicographic rank order, keeping the ranks fixed throughout the call.
This pairing schedule differs from the hypercube pairings with periodic population reshuffling used in the Mixing Genetic Algorithm~\cite{varadarajan2019massively}.
\rgcx\ calls alternate with Metropolis sweeps at a fixed temperature, without a temperature ladder or an annealing schedule.

Our contributions are the following.
\begin{itemize}
  \item We introduce \strgcx\ and establish the properties of its pair operation (\cref{sec:method}).
  \item On an 8-core desktop computer, the estimated \ttsfixed\ of \strgcx\ for G81 is \SI[round-mode=places,round-precision=1]{\GeightyoneTTS}{\second}, about two orders of magnitude below the shortest published TTS99.
  Using the same population size, exchange interval and temperature, the method reaches the best known cuts on all 17 toroidal Gset graphs, including \certifiedCount\ certified optima (\cref{sec:results}).
  \item In ablations on G81 and G72, no run without exchange or with random cluster exchange reached the optimum within \num{\abRunLimit} sweeps per run, \abBudgetRatio\ times the \num{\mainRunLimit}-sweep limit used in the main experiments.
  On G81, fixed labels, ranking by ascending cut, quenched states and discarding the complement made the search slower (\cref{sec:ablations}).
  \item An integer implementation runs on a \SI{\mcuClock}{\mega\hertz} microcontroller.
  A timing model, checked against eight runs on the board, gives a median of \mcuMedian\ minutes for G72 (\cref{sec:results-mcu}).
\end{itemize}

\section{Method}
\label{sec:method}

\subsection{Problem and single-temperature dynamics}
\label{sec:method-algorithm}

Let $G=(V,E)$ be an undirected graph without self-loops, with $N=|V|$ vertices, $M=|E|$ edges and integer weights $w_{ij}=w_{ji}$.
A state is a vector $x\in\{0,1\}^N$, and its cut is
\begin{equation}
  \mathrm{cut}(x)=\sum_{\{i,j\}\in E} w_{ij}\,[x_i\neq x_j],
  \label{eq:cut}
\end{equation}
where $[\cdot]$ is 1 if its condition holds and 0 otherwise.
With spins $\sigma_i=1-2x_i$, maximizing the cut is the same as minimizing the Ising energy $H(\sigma)=\sum_{\{i,j\}\in E} w_{ij}\sigma_i\sigma_j=W_\mathrm{tot}-2\,\mathrm{cut}(x)$, where $W_\mathrm{tot}$ is the sum of all weights~\cite{barahona1988application}.
The cut does not change under the global flip $x\mapsto\bar x=1-x$.
Flipping vertex $i$ changes the cut by an integer gain $g_i(x)$, which on a toroidal $\pm1$ lattice lies in $\{-4,-2,0,2,4\}$.
We measure the temperature $T>0$ in units of the cut, so the corresponding Ising temperature is $2T$.

A run holds a population of $R$ replicas, each with a uniformly random initial state.
The run maintains the best state found so far, called the incumbent, and alternates two kinds of work (\cref{fig:schematic}(a) and \cref{alg:strgcx}).

\emph{Metropolis sweeps.}
In one Metropolis sweep~\cite{metropolis1953equation}, a replica visits its $N$ vertices in a fresh random order and proposes each flip once.
A flip is accepted with probability $\min(1,e^{g_i/T})$, except that moves with $g_i<-20T$ are rejected outright.
All replicas use the same temperature throughout.
The cutoff changes each acceptance probability by at most $e^{-20}\approx\num{2e-9}$.

\emph{\rgcx\ calls.}
Let $s$ denote the number of completed sweeps per replica, $s_0$ the exchange-start threshold, and $k$ the exchange interval.
\rgcx\ calls are scheduled every $k$ sweeps after the exchange-start threshold $s_0$.
The call ranks the replicas by their current cut and applies the pair operation to every pair of ranks in a fixed order (\cref{sec:method-properties}).
The incumbent is compared with a replica after every accepted move and with every replica after every exchange call.
A run stops upon reaching the target cut or after a maximum of $L$ sweeps per replica.

\begin{figure}[!htb]
\centering
\includegraphics[width=\textwidth]{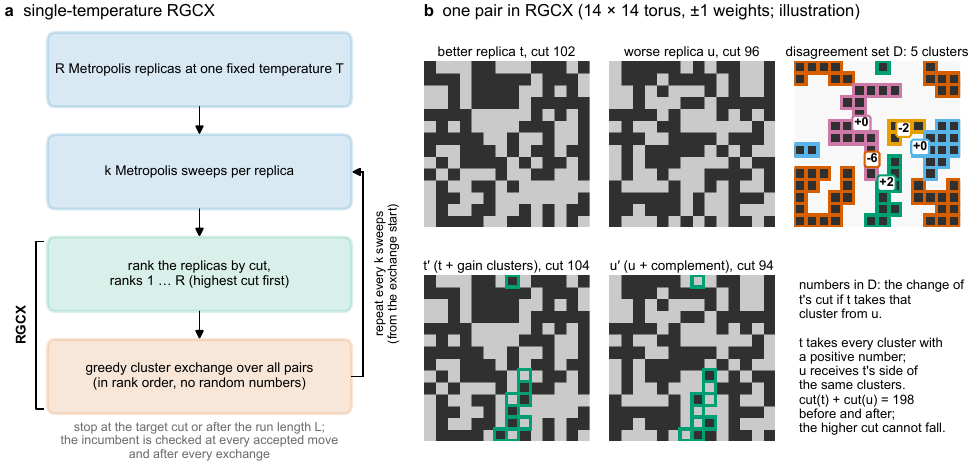}
\caption{\Strgcx. (a)~One run: Metropolis sweeps at one temperature and, every $k$ sweeps, an exchange call that ranks the replicas by cut and processes all pairs. (b)~One pair on a $14\times14$ torus with $\pm1$ weights, as an illustration. The disagreement set $D$ splits into five clusters, each marked with the change of the better replica's cut if it takes that cluster from the worse replica. The better replica takes every cluster with a positive change, the worse replica receives the complement, and the summed cut stays 198.}
\label{fig:schematic}
\end{figure}

\begin{algorithm}[!htb]
\caption{\Strgcx: one run.}
\label{alg:strgcx}
\begin{algorithmic}[1]
\Require graph $G$, number of replicas $R$, temperature $T$, interval $k$, start $s_0$, length $L$, target $c_\mathrm{t}$
\State draw $R$ replicas uniformly at random, and let the incumbent $x^\ast$ be the best of them
\For{$s=1,\dots,L$}
  \State give every replica one Metropolis sweep at temperature $T$, and update $x^\ast$
  \If{an \rgcx\ call is scheduled}
    \State rank the replicas by cut, highest first
    \For{each pair of ranks $(p,q)$, $p<q$, in lexicographic order}
      \State $\textsc{PairExchange}$ on the replicas ranked $p$ and $q$
    \EndFor
    \State update $x^\ast$
  \EndIf
  \State \textbf{if} $\mathrm{cut}(x^\ast)\geq c_\mathrm{t}$ \textbf{then} stop
\EndFor
\State \Return $x^\ast$
\Statex
\Procedure{PairExchange}{$a,b$}
  \State globally flip $b$ if it differs from $a$ on more than $N/2$ vertices
  \State $t\gets$ the replica with the higher cut ($a$ on a tie), $u\gets$ the other
  \For{each cluster $C$ of the set where $a$ and $b$ differ}
    \State \textbf{if} $t$ gains by taking the values of $u$ on $C$ \textbf{then} exchange $C$ between $t$ and $u$
  \EndFor
\EndProcedure
\end{algorithmic}
\end{algorithm}

\subsection{Ranked greedy cluster exchange}
\label{sec:method-properties}

At the start of each \rgcx\ call, the replicas are ranked by their current cuts, highest first, and the ranks stay fixed during the call.
The pairs are visited in lexicographic order of their ranks, $(1,2)$, $(1,3)$, \dots, $(1,R)$, $(2,3)$, \dots, $(R-1,R)$, so the replica with the highest cut meets every other replica first.
Ties in the cut ranking are broken by replica index.
Ranks determine the processing order, whereas the better replica in each pair is identified from the current cuts.

Consider a pair $(a,b)$, already aligned so that their disagreement set $D$ holds at most $N/2$ vertices ($b$ is unchanged at equality).
Let $C_1,\dots,C_m$ be the clusters of $D$, the connected components of the subgraph induced by $D$.
The two replicas agree outside $D$, no edge joins two different clusters, and an edge inside a cluster is cut in $a$ exactly when it is cut in $b$.
The cut of either replica therefore splits into a common part $A$ and one term per cluster,
\begin{equation}
  \mathrm{cut}(a)=A+\sum_{\ell=1}^m c_a(C_\ell),\qquad
  \mathrm{cut}(b)=A+\sum_{\ell=1}^m c_b(C_\ell),
  \label{eq:split}
\end{equation}
where $c_a(C_\ell)$ is the sum of the weights of the cut edges with at least one endpoint in $C_\ell$.
Let $t$ be the replica with the higher current cut ($a$ on a tie) and $u$ the other.
The gain of $t$ from taking the values of $u$ on $C_\ell$ is $\Delta_\ell=c_u(C_\ell)-c_t(C_\ell)$, a sum over the edges between $C_\ell$ and the rest of the graph.
The pair operation exchanges every cluster with $\Delta_\ell>0$ between the two replicas and leaves the others alone.

\subsection{Properties and computational cost}
\label{sec:method-order}

\begin{lemma}
\label{lem:pair}
Let $t'$ and $u'$ be the two replicas after the pair operation, the children of the parents $a$ and $b$.
\begin{enumerate}
  \item The summed cut is conserved: $\mathrm{cut}(t')+\mathrm{cut}(u')=\mathrm{cut}(a)+\mathrm{cut}(b)$.
  \item $\mathrm{cut}(t')=A+\sum_\ell\max\bigl(c_a(C_\ell),c_b(C_\ell)\bigr)\geq\max\bigl(\mathrm{cut}(a),\mathrm{cut}(b)\bigr)$.
  \item Among the $2^m$ states that take each cluster from one of the two parents, none has a higher cut than $t'$ or a lower cut than $u'$, its cluster-wise complement.
\end{enumerate}
\end{lemma}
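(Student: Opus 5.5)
The plan is to first strengthen the decomposition \eqref{eq:split} into a statement about every mixed state. Then all three claims follow from a per-cluster comparison.

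Fix the aligned pair $(a,b)$, its disagreement set $D$ and clusters $C_1,\dots,C_m$. The global flip used for alignment does not change any cut, so I work with the aligned parents throughout. For a choice vector $\epsilon\in\{a,b\}^m$, let $y^\epsilon$ be the state that agrees with both parents outside $D$ and equals parent $\epsilon_\ell$ on $C_\ell$.

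\textbf{Step 1: decomposition for all mixed states.} I claim $\mathrm{cut}(y^\epsilon)=A+\sum_\ell c_{\epsilon_\ell}(C_\ell)$, where $A$ is the weight of cut edges with no endpoint in $D$. This is a case split over edges.
\begin{itemize}
\item An edge with no endpoint in $D$ has the same status in $y^\epsilon$ as in both parents.
\item No edge joins two different clusters, since these are components of the subgraph induced by $D$. Hence every other edge has at least one endpoint in exactly one cluster $C_\ell$, and no endpoint in any other cluster. Its endpoints therefore lie in $C_\ell\cup(V\setminus D)$.
\item On that vertex set, $y^\epsilon$ coincides with parent $\epsilon_\ell$, so the edge contributes exactly as it does to $c_{\epsilon_\ell}(C_\ell)$.
\end{itemize}
No edge is counted twice, because the clusters are disjoint and no edge touches two of them. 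Taking $\epsilon$ constant recovers \eqref{eq:split}.

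\textbf{Step 2: identify the children.} The pair operation exchanges $C_\ell$ exactly when $\Delta_\ell=c_u(C_\ell)-c_t(C_\ell)>0$. Therefore $t'=y^{\epsilon}$ with $\epsilon_\ell$ chosen so that $c_{\epsilon_\ell}(C_\ell)=\max(c_a(C_\ell),c_b(C_\ell))$; on a tie, keeping $t$'s values still attains the maximum. Similarly, $u'$ is the cluster-wise complement $y^{\bar\epsilon}$, which attains $\min(c_a(C_\ell),c_b(C_\ell))$ on each cluster. Step 1 then gives the formula in item 2.

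\textbf{Step 3: derive the three items.}
\begin{itemize}
\item Item 1: summing $\max+\min=c_a+c_b$ over clusters gives $\mathrm{cut}(t')+\mathrm{cut}(u')=2A+\sum_\ell(c_a+c_b)=\mathrm{cut}(a)+\mathrm{cut}(b)$.
\item Item 2: the inequality holds term by term, since $\max(c_a,c_b)\ge c_a$ and $\max(c_a,c_b)\ge c_b$ for each cluster.
\item Item 3: for any $\epsilon$, Step 1 bounds each term $c_{\epsilon_\ell}(C_\ell)$ between the cluster-wise min and max. Summing gives $\mathrm{cut}(u')\le\mathrm{cut}(y^\epsilon)\le\mathrm{cut}(t')$.
\end{itemize}

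The only real obstacle is Step 1: arguing carefully that a mixed state's cut separates additively. This rests on the absence of edges between distinct clusters, and on each boundary edge of $C_\ell$ having its outer endpoint outside $D$, where the parents agree. Everything after that is bookkeeping.
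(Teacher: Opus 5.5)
Your proof is correct and takes essentially the same route as the paper's sketch: extend \cref{eq:split} to every cluster-wise mixture of the aligned parents, identify $t'$ and $u'$ as the per-cluster max and min choices, and read off all three claims. The differences are minor. Your Step 1 verifies the independence of the per-cluster terms, which the paper only asserts, and you get item 1 from $\max+\min=c_a(C_\ell)+c_b(C_\ell)$ instead of from each exchange moving $\Delta_\ell$ from one replica to the other, which is the same fact.
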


\begin{proof}[Proof sketch]
\renewcommand{\qedsymbol}{}
Exchanging $C_\ell$ moves the amount $\Delta_\ell$ from one replica to the other, which gives claim (1).
Both replicas flip the same vertices, so their disagreement set stays $D$.
By \cref{eq:split}, a child's cut is $A$ plus one term per cluster taken from either parent, and the terms are independent.
The child that takes the larger term everywhere is $t'$, and the child that takes the smaller term everywhere is $u'$, which gives claims (2) and (3).
\end{proof}

Applied again to the same pair of children, without intervening changes to either replica, the operation exchanges nothing: their disagreement clusters are unchanged and none has a positive gain for the better child.

\rgcx\ is a deterministic optimization move that favors the better replica of each pair.
A pair operation that moves an amount $\delta>0$ from $u$ to $t$ keeps the summed cut and raises the sum of the squared cuts by $2\delta\,\bigl(\mathrm{cut}(t)-\mathrm{cut}(u)\bigr)+2\delta^2>0$.
An exchange call therefore keeps the mean cut of the population and raises the variance of its cuts whenever a cluster moves.
Whenever a strictly improving exchange has positive probability, a call increases the expected variance of the population's cuts and therefore cannot preserve the product Boltzmann distribution at $T$.
The best cut in the population never decreases within an exchange call.

Pair operations on disjoint replicas commute.
An \rgcx\ call costs $O(R^2(N+M))$, and at our paper's setting the exchange calls take \SIrange{\exchangeShareLo}{\exchangeShareHi}{\percent} of the wall time of a run on the desktop computer.

The exchange uses no random numbers, and the settings and random-number consumption do not depend on $L$.
For a fixed build and seed, the first $h$ sweeps of a longer run therefore match a run of length $h$, which lets us evaluate shorter cutoffs from one long run (\cref{sec:setup}).

\section{Experimental setup}
\label{sec:setup}

\paragraph{Instances.}
The main benchmark is the set of all 17 toroidal graphs of Gset~\cite{helmberg2000spectral}, with the graph files distributed at \url{https://web.stanford.edu/~yyye/yyye/Gset/}.
Our targets are their best known cuts.
For \certifiedCount\ of the 17 graphs, including G62, G72, G77 and G81, the upper bounds of Glass and Feder equal these values and prove them optimal~\cite{glass2026maximum}.
\Cref{sec:benchmark} lists all 17 graphs, their edge weights, and the corresponding upper bounds on the maximum cut.

\paragraph{Setting.}
All runs use $R=\Rrep$ replicas, $k=\kint$ sweeps between exchange calls and $T=\Tpaper$, unless an arm varies one of them.
The exchange start is $s_0=\startGeightyone$ for G81, $s_0=\startGseventytwo$ for G72 and G77, and $s_0=\startOther$ for the other Gset tori.
This setting was chosen in tuning runs, mainly on G81 with checks on G62, G72 and G77, with seeds not used in the experiments reported here.

\paragraph{Platforms and runs.}
The runs on the Gset tori under our default settings used an 8-core desktop computer (\desktopCPU, \SI{\desktopClock}{\giga\hertz} base frequency), with all eight cores working on one run at a time.
The ablations ran on a single node of the JUSUF high-performance computing (HPC) cluster~\cite{jsc2021jusuf}, equipped with two AMD EPYC 7742 processors (\clusterCores\ cores each, \SI{\clusterClock}{\giga\hertz} base frequency), one run per core.
Both arms of a comparison always ran on the same machine.
The arms of a comparison share the master seed and the seed indices, so a pair of runs shares its random initial conditions.
We used a C++ implementation on the desktop computer under Windows and the HPC cluster under Linux.
G81, G77, G72 and G62 have \num{2048} runs each, and the other 13 graphs have \EtwoRuns\ runs each.
Each run was limited to \num{\mainRunLimit} sweeps per replica.
The ablations have \abRuns\ paired runs per graph and arm, with each run limited to \num{\abRunLimit} sweeps per replica.

\paragraph{Cost measures.}
We report cost in wall seconds and replica-sweeps.
Wall seconds measure the latency of one run on its machine.
Replica-sweeps count the sweeps of all replicas and do not depend on the hardware, but they leave out the exchange, which takes most of the time (\cref{sec:method-order}).

\paragraph{Time to solution.}
Let $p_L$ be the fraction of runs that reach the target within $L$ sweeps, and $t_L$ the mean cost of a run of $L$ sweeps.
Our primary performance metric is the \ttsfixed,
\begin{equation}
  \mathrm{TTS99}(L)=t_L\,\max\!\left(1,\frac{\ln 0.01}{\ln(1-p_L)}\right),
  \label{eq:tts}
\end{equation}
which estimates the computational cost of independent runs of length $L$ needed to achieve a \SI{99}{\percent} probability of at least one success, with a minimum cost of one run.
If every run reaches the target, $p_L=1$ and the estimate is the cost of one run.
If none does, it is infinite, and we report that no run reached the target.
Every run is charged its full length $L$, including the sweeps after a hit.
Runs stop at the target, so for a run that hit before $L$ the cost of the remaining sweeps is computed from that run's own cost per sweep and per exchange call (\cref{sec:statistics}).
We choose $L^\ast$, the length that minimizes \cref{eq:tts}, on the runs with even seed index and report $\mathrm{TTS99}(L^\ast)$ on those with odd seed index, so that the choice does not bias the estimate.

For paired ablations, we use a restart-based TTS99: the 99th percentile of the total time to the first success when unsuccessful runs are restarted at a specified cutoff.
The cutoff is selected on one seed subset and evaluated on the other, with the roles reversed and the two estimates averaged.
We refer to this quantity as \ttsrenewal\ and evaluate it in both wall seconds and replica-sweeps.
\Cref{sec:statistics} gives the estimator details and the statistical decision rules.

\paragraph{Statistics.}
Unless otherwise stated, we report \SI{95}{\percent} bootstrap confidence intervals based on \num{2000} resamples of runs.
The confirmatory comparisons retain the original Holm correction (\cref{sec:statistics}).

\section{Results}
\label{sec:results}

\subsection{G81 and three other certified tori}
\label{sec:results-g81}

\Cref{tab:headline} gives the cost of G81 and three other certified tori on the desktop computer.
The estimated \ttsfixed\ for G81 is \SI[round-mode=places,round-precision=1]{\GeightyoneTTS}{\second} [\num[round-mode=places,round-precision=1]{\GeightyoneTTSlo}, \num[round-mode=places,round-precision=1]{\GeightyoneTTShi}], with a success rate of \GeightyonePodd\ at $L^\ast=\GeightyoneLstar$ sweeps.
The median first-hit time is \SI{\GeightyoneMedianHit}{\second}, and \SI{90}{\percent} of runs reach the optimum within \SI{\GeightyoneQninety}{\second} (\cref{fig:first-hit}(b)).
For G77, G72 and G62 the \ttsfixed\ lies between \SI{\GseventysevenTTS}{\second} and \SI{\GseventytwoTTS}{\second}.
A second set of 512 G81 runs with fresh seeds gave a \ttsfixed\ of \SI[round-mode=places,round-precision=1]{\GeightyoneTTSfresh}{\second} [\num[round-mode=places,round-precision=1]{\GeightyoneTTSfreshlo}, \num[round-mode=places,round-precision=1]{\GeightyoneTTSfreshhi}] on the 256 odd-indexed runs, with $L^\ast$ selected on the even-indexed runs.
The independent-seed estimates are close.
Recorded move events place every first hit in the runs of \cref{tab:headline} in an exchange call, never in a Metropolis move.

\begin{table}[!htb]
\centering
\caption{Time to solution of four certified Gset tori on the 8-core desktop computer. $L^\ast$ and $p$ (success rate within $L^\ast$) come from disjoint halves of the \num{2048} runs per graph (\cref{sec:setup}). Times are wall seconds. The \ttsfixed\ at $L^\ast$ carries a \SI{95}{\percent} interval. The last column is the median time of a single run to the optimum. \Cref{sec:statistics} gives the timing details.}
\label{tab:headline}
\begin{tabular}{@{}lrrrrr@{}}
\toprule
graph & $N$ & $L^\ast$ & $p$ & TTS99($L^\ast$) (s) & median first hit (s) \\
\midrule
G81 & \num{20000} & \GeightyoneLstar & \GeightyonePodd & \num[round-mode=places,round-precision=1]{\GeightyoneTTS}\ [\num[round-mode=places,round-precision=1]{\GeightyoneTTSlo}, \num[round-mode=places,round-precision=1]{\GeightyoneTTShi}] & \GeightyoneMedianHit \\
G77 & \num{14000} & \GseventysevenLstar & \GseventysevenPodd & \GseventysevenTTS\ [\GseventysevenTTSlo, \GseventysevenTTShi] & \GseventysevenMedianHit \\
G72 & \num{10000} & \GseventytwoLstar & \GseventytwoPodd & \GseventytwoTTS\ [\GseventytwoTTSlo, \GseventytwoTTShi] & \GseventytwoMedianHit \\
G62 & \num{7000} & \GsixtytwoLstar & \GsixtytwoPodd & \GsixtytwoTTS\ [\GsixtytwoTTSlo, \GsixtytwoTTShi] & \GsixtytwoMedianHit \\
\bottomrule
\end{tabular}
\end{table}

\begin{figure}[!htb]
\centering
\includegraphics[width=\textwidth]{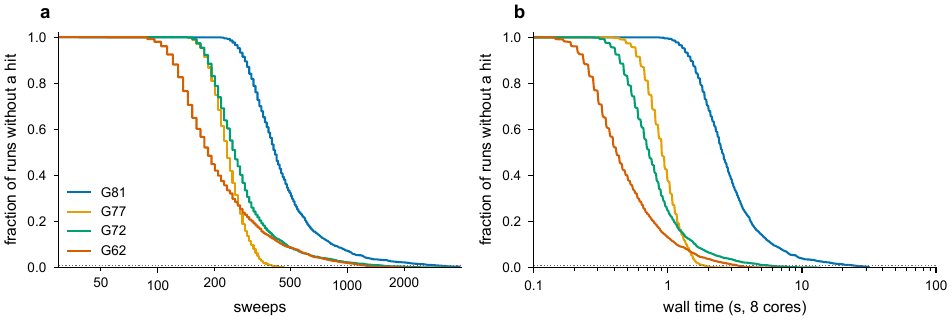}
\caption{First hits on the four tori of \cref{tab:headline}, \num{2048} runs each. (a)~Fraction of runs that have not reached the optimum, against sweeps. (b)~The same against wall time on the 8-core desktop computer. The dotted line in (a) and (b) marks \SI{1}{\percent} of runs without a hit.}
\label{fig:first-hit}
\end{figure}

\subsection{The toroidal benchmark and published comparisons}
\label{sec:results-class}
\label{sec:results-comparison}

On the other 13 toroidal graphs of Gset, all \EtwoRuns\ runs per graph reached the best known cut within 500 sweeps, with a \ttsfixed\ of \SIrange{\EtwoSmallLo}{\EtwoLargeHi}{\second} (\cref{sec:benchmark}).

\Cref{tab:published} lists the published times to the optimum of the certified tori that we found, each as its source defines it.
Among the published metrics considered here, Cosm's TTS is the most directly comparable to our \ttsfixed\ estimate~\cite{zick2026cosm}.
Both estimates use success probabilities and per-run costs rather than a directly measured first-hit latency.
Cosm v2 reports TTS values calculated from success probabilities and average per-trial times under an assumed configuration of 32 CPU cores with two threads per core.
Our per-run cost is the wall time of one run using all 8 cores of the desktop computer.
Cosm's reported TTS is over \cosmRatioLowerGeightyone\ times that of \strgcx\ on G81, about \cosmRatioGseventyseven\ times on G77, and \cosmRatioGseventytwo\ times on G72.
These are ratios of reported values from different hardware and timing protocols.
The heuristic of Glass and Feder~\cite{glass2026maximum} reaches the optimum of all four graphs, but its times come without a success rate.
Among the sources we found, no method reports a shorter time to the optimum of any of the four graphs, whether or not its time comes with a success rate.
For G62, Glass and Feder provide the only published time we found.

\begin{table}[!htb]
\centering
\caption{Published times to the optimum of the certified tori, in seconds, each as its source defines it. \Strgcx: \ttsfixed\ at $L^\ast$, 8-core desktop computer. Cosm v2: TTS calculated using average per-trial times under an assumed 32-core, two-thread-per-core configuration (later versions of the preprint omit these times). Zick 2025: time to target from the mean trial time and the success rate, laptop with 6 MATLAB workers. Glass \& Feder: heuristic time, without a success rate. Their table specifies an Intel i5 at \SI{2.6}{\giga\hertz}. The hardware used for the G81 heuristic timing is not unambiguously specified. A dash means no value was found.}
\label{tab:published}
\begin{tabular}{@{}ccccc@{}}
\toprule
graph & \begin{tabular}[c]{@{}c@{}}Single-temperature\\\rgcx\end{tabular} & Cosm v2~\cite{zick2026cosm} & Zick 2025~\cite{zick2025performance} & Glass \& Feder~\cite{glass2026maximum} \\
\midrule
G81 & \num[round-mode=places,round-precision=1]{\GeightyoneTTS} & \num{\cosmGeightyone} & \num{\zickGeightyone} & \num{\gfGeightyone} \\
G77 & \GseventysevenTTS & \cosmGseventyseven & \num{\zickGseventyseven} & \gfGseventyseven \\
G72 & \GseventytwoTTS & \cosmGseventytwo & \num{\zickGseventytwo} & \gfGseventytwo \\
G62 & \GsixtytwoTTS & -- & -- & \gfGsixtytwo \\
\bottomrule
\end{tabular}
\end{table}

\subsection{Ablation of the main components}
\label{sec:ablations}

We compare \strgcx\ with six ablation variants, each modifying one component of the reference configuration defined in \cref{sec:setup}, on the same seed indices.
We performed \abRuns\ paired runs per arm on each of G81 and G72.
\Cref{fig:ablation} gives the ratios of the \ttsrenewal, arm over our paper's setting, and the outcome of the decision rule of \cref{sec:statistics}.
The random rule exchanges each cluster independently with probability one half.
Fixed labels give replica $i$ the rank $i$ in every call, and ascending labels rank by increasing cut.
The quench replaces the Metropolis sweeps by sweeps at $T=0.05$, where the acceptance cutoff rejects every proposed move that lowers the cut.
Discarding the complement restores the worse replica of each pair to its state before the pair operation.

\begin{figure}[!htbp]
\centering
\includegraphics[width=0.95\textwidth]{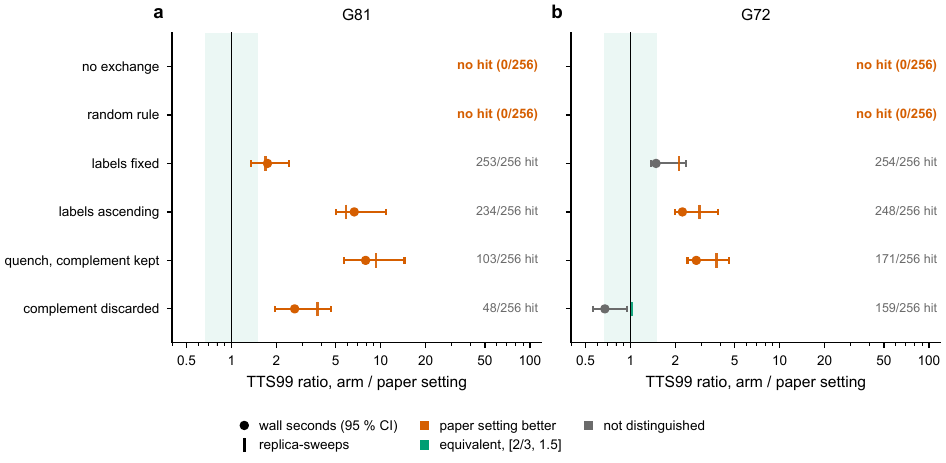}
\caption{Ablations on G81 (a) and G72 (b), \abRuns\ paired runs per arm. Each row specifies a modification of \strgcx. Dots: ratio of the \ttsrenewal\ in wall seconds, arm over our paper's setting, with \SI{95}{\percent} interval. Vertical ticks: the same ratio in replica-sweeps. Marker colors indicate the decision in the corresponding unit: circles for wall time and vertical ticks for replica-sweeps. The shaded band is the equivalence margin $[2/3,1.5]$. The number of the arm's runs that reached the optimum is given on the right, and arms in which none did within \num{10000} sweeps are marked ``no hit''. Equivalence is declared when the \SI{90}{\percent} interval of the ratio lies entirely within $[2/3,1.5]$ (\cref{sec:statistics}). On G81, all displayed comparisons except ascending labels belong to the original confirmatory family. Ascending labels and all G72 direction checks are exploratory.}
\label{fig:ablation}
\end{figure}

Without exchange calls, no run reached the optimum of G81 or G72 within \num{\abRunLimit} sweeps per run, \abBudgetRatio\ times the \num{\mainRunLimit}-sweep limit used in the main experiments.
The same holds for the random rule, which differs from Houdayer's original move of a single randomly chosen cluster.

Fixed labels made G81 slower by a factor of \fixedRS\ [\fixedRSlo, \fixedRShi] in replica-sweeps and \fixedWall\ [\fixedWalllo, \fixedWallhi] in wall seconds.
Ranking by ascending cut, so that the worst replica meets every other one first, made it slower by a factor of \ascRS\ in replica-sweeps and \ascWall\ in wall seconds.
On G72 both changes made the search slower in replica-sweeps, and ascending labels also in wall seconds.

Replacing the Metropolis sweeps by quenches made G81 slower by a factor of \qkeepRS\ in replica-sweeps and \qkeepWall\ in wall seconds, and only \qkeepHits\ of \abRuns\ runs reached the optimum.
Discarding the complement made G81 slower by a factor of \parentRS\ in replica-sweeps and \parentWall\ in wall seconds.
On G72 this arm was equivalent in replica-sweeps (\parentGseventytwoRS) and not distinguished in wall seconds (\parentGseventytwoWall), so a benefit of keeping the complement was shown on G81 only.

\subsection{A microcontroller}
\label{sec:results-mcu}

\begin{figure}[!htbp]
\centering
\includegraphics[width=\textwidth]{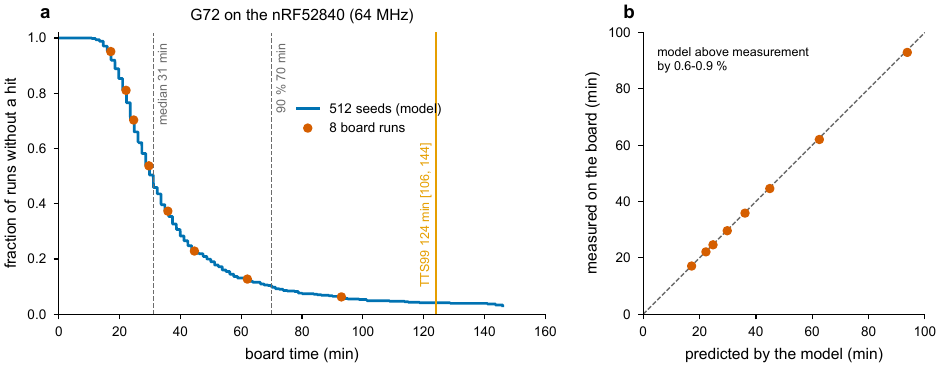}
\caption{G72 on the nRF52840 board at \SI{\mcuClock}{\mega\hertz}. (a)~Fraction of runs without a hit versus board time, using modeled times from 512 host runs (line) and measurements from 8 board runs (dots), with the median, the 90th percentile and the \ttsfixed. The model curve is truncated at the time corresponding to the \num{1000}-sweep run limit. (b)~Measured board time against the model's prediction for the 8 board runs. The dashed line is equality.}
\label{fig:mcu}
\end{figure}

We ran G72 at our paper's setting on a Seeed Studio XIAO nRF52840 Sense (\SI{\mcuClock}{\mega\hertz})
development board~\cite{xiao52840sense}, with a separate integer implementation that keeps all \Rrep\ replicas of the \num{10000}-vertex graph and their working data in a fixed arena of \SI{\mcuArena}{\kibi\byte} without heap memory.
The port uses a different random number generator and sweep order from the host implementation.

Because board runs are slow, we ran 512 seeds using a host-computer build of the microcontroller code.
A timing model calibrated on a separate board run converted their first-hit sweeps into board minutes.
Eight board runs, chosen to span the distribution, reproduced their host-run records bit for bit, and the model overestimated their times by \SIrange{\mcuModelLo}{\mcuModelHi}{\percent} (\cref{fig:mcu}(b)).
Each board run took between \mcuBoardLo\ and \mcuBoardHi\ minutes.

Of the 512 host runs, \mcuHits\ reached the optimum within \num{1000} sweeps.
By the model, a run reaches the optimum in a median of \mcuMedian\ minutes, and \SI{90}{\percent} of runs do so within \mcuPninety\ minutes (\cref{fig:mcu}(a)).
The \ttsfixed\ is \mcuTTS\ minutes [\mcuTTSlo, \mcuTTShi] at $L^\ast=\mcuLstar$ sweeps (\cref{sec:statistics}).

\begin{samepage}
These results show that the full population search can run on a general-purpose microcontroller with a small working memory.
\end{samepage}

\section{Discussion and conclusion}
\label{sec:discussion}

The results suggest that cluster exchange redistributes favorable clusters between replicas, while Metropolis updates continue to generate new configurations.

By \cref{lem:pair} the pair operation is a ratchet: the better replica of a pair never loses, and the summed cut is conserved, so the exchange selects rather than samples.
With the same clusters exchanged at random instead, no run reached the optimum of G81 or G72 within the ablation budget, supporting the importance of greedy cluster selection.
The Metropolis sweeps supply new clusters, the ranking decides which replicas meet first, and the population supplies the partners. The gain from \rgcx\ depends on the decomposition of the aligned disagreement set into clusters and on whether exchanging any of these clusters increases the better replica's cut.

\Strgcx\ returns candidate cuts without certifying their optimality.
A matching upper bound, such as those computed by Glass and Feder~\cite{glass2026maximum}, certifies a candidate's optimality.

\Strgcx\ combines fixed-temperature Metropolis sweeps with ranked greedy exchange, using cooperation between replicas to sustain the search at one temperature.
On an 8-core desktop computer the estimated \ttsfixed\ for G81 is \SI[round-mode=places,round-precision=1]{\GeightyoneTTS}{\second}, about two orders of magnitude below the shortest published TTS99.
With the same population size, exchange interval and temperature, the method reaches the best known cuts on all 17 toroidal Gset graphs, covering both $\pm1$ and unit edge weights.
The integer implementation also reaches the optimum of G72 on a \SI{\mcuClock}{\mega\hertz} microcontroller, demonstrating feasibility under tight memory constraints.
\Strgcx\ provides an effective population-based method for Max-Cut on two-dimensional toroidal lattices.

\begingroup
\small
\bibliographystyle{unsrtnat}
\bibliography{refs}
\endgroup

\clearpage
\appendix
\section*{Appendix}
\renewcommand{\thetable}{\thesection\arabic{table}}
\setcounter{table}{0}
\section{Statistical and timing details}
\label[appendix]{sec:statistics}
The at-least-one-run rule in \cref{eq:tts} also applies to every bootstrap draw and to the choice of $L^\ast$.
A direct check on 256 runs of G81 without the stop at the target gave a mean cost within \SI{\VoneDeviation}{\percent} of this estimate.
We select $L^\ast$ from the sweeps that end an exchange call, up to \num{2000} or the run limit, whichever is smaller.
For fixed-length TTS99, bootstrap intervals resample the odd-indexed evaluation runs while holding the selected $L^\ast$ fixed.

\paragraph{Renewal TTS99.}
The renewal TTS99 models a solver that restarts a run whenever it reaches a cutoff $L_\mathrm{c}$ without a hit.
Let $t_1\leq\dots\leq t_n$ be the first-hit times of the $n$ runs, with $t_i=\infty$ for a run without a hit, and let $q$ be the fraction of runs with $t_i\leq L_\mathrm{c}$.
Let $j$ be the largest nonnegative integer with $(1-q)^j>0.01$, and $r$ the empirical quantile of the $t_i$ at level $1-0.01/(1-q)^j$.
The renewal TTS99 is then $jL_\mathrm{c}+r$, the \SI{99}{\percent} quantile of the total time, with the last partial attempt included.
If every run hits within $L_\mathrm{c}$, it is the \SI{99}{\percent} quantile of the $t_i$.
If no run hits, the estimate is infinite, and the arm is reported as without hits.
Times are counted in replica-sweeps, with cutoffs on a grid of 10 sweeps.
Cutoffs are chosen on the runs with even seed index and evaluated on those with odd index, and the other way round, and the two estimates are averaged.
Paired ratios in wall seconds apply the estimator directly to first-hit times in seconds, with cutoffs on a grid in seconds, rather than dividing separately converted single-arm estimates.

\paragraph{Paired comparisons.}
Two arms are compared by the ratio of their \ttsrenewal, with a paired bootstrap over seed indices, in replica-sweeps and in wall seconds.
Each bootstrap draw resamples paired runs within each seed-parity half and repeats the cutoff selection and cross-evaluation.
In each unit, arm B is slower than arm A if the ratio B/A is at least 1.5, its \SI{95}{\percent} interval lies above 1, and a sign test on which run of a pair hits first, counted in sweeps, gives an unadjusted $p<0.01$.
Arm B is faster under the mirrored conditions: a ratio of at most $2/3$, an interval below 1 and the same sign test.
The arms are equivalent in a unit if the \SI{90}{\percent} interval of the ratio lies within $[2/3,1.5]$, and any other outcome is reported as not distinguished.
Unqualified statements of slowing or equivalence require agreement in both units.
Otherwise, the unit is specified.

For each experiment, the design, the confirmatory comparisons and these decision rules were recorded before it ran.
In the ablations, the $p$-values of the confirmatory comparisons are also adjusted with Holm's method~\cite{holm1979simple} at a family-wise level of 0.05.
Every confirmatory outcome reported in \cref{sec:ablations} holds after this adjustment, and intervals are reported unadjusted.
Comparisons not named as confirmatory are exploratory.

The Holm correction retains the original six G81 comparisons: no exchange, the random rule, fixed labels, discarding the complement, quenched states and one soft-exchange arm not displayed here.
G72 comparisons and ascending labels were exploratory.
Arms with no successful runs are compared with the reference configuration using an exact McNemar test on paired success outcomes.
Each arm of a comparison ran as one job, so the intervals of wall-second ratios resample runs within those jobs and do not include the variation of the cost per sweep between separate jobs.

\paragraph{Microcontroller timing.}
The time model for \cref{sec:results-mcu} gives a run's board time as
\begin{equation*}
  t_\mathrm{board}=\SI{2.757}{\second}\times s+\SI{2.502}{\micro\second}\times v,
\end{equation*}
where $s$ is the number of sweeps of the population and $v$ the number of vertices visited in the exchange calls.
It was calibrated on the last G72 board run of the port's development, separate from the eight validation runs.
Before a recorded run ends, visits at $L$ are scaled by the fraction of its total host exchange time elapsed; beyond its end, remaining calls use the mean visits per call over its last five calls.
The eight board runs in \cref{fig:mcu} used seeds chosen at quantiles 0.05 to 0.97 of the hit-sweep distribution among the successful host runs.

\section{Complete toroidal benchmark results}
\label[appendix]{sec:benchmark}
The 17 toroidal graphs of Gset are G11--G13 ($N=800$), G32--G34 ($N=\num{2000}$), G48--G50 ($N=\num{3000}$), G57 ($N=\num{5000}$), G62 ($N=\num{7000}$), G65--G67 ($N=\num{8000}$--\num{10000}), G72 ($N=\num{10000}$), G77 ($N=\num{14000}$) and G81 ($N=\num{20000}$).
All are two-dimensional periodic lattices of degree four.
G48--G50 have unit weights, and G48 and G49 are bipartite, so their optimum is the number of edges.
The other 14 graphs have weights $\pm1$.
For \certifiedCount\ graphs (G11, G48, G49, G57, G62, G65, G72, G77 and G81), the upper bound of Glass and Feder~\cite{glass2026maximum} equals the best known cut.
For the other eight, the bounds exceed the best known cut by \boundGapLo\ to \boundGapHi\ for seven graphs and by \boundGapGfifty\ for G50.
For comparison across graphs we also give the \ttsfixed\ at $L=500$ on all runs.

\begin{table}[!htbp]
\centering
\caption{All 17 Gset tori on the 8-core desktop computer. C: certified optimum; B: best known cut. Times are wall seconds. $L^\ast$ is chosen on even seed indices and evaluated on odd ones, with bootstrap \SI{95}{\percent} intervals. TTS99(500) uses all runs. G62, G72, G77 and G81 have \num{2048} runs each, and the other graphs \EtwoRuns\ each.}
\label{tab:benchmark}
\small
\setlength{\tabcolsep}{4pt}
\begin{tabular}{@{}lrrcrrr@{}}
\toprule
graph & $N$ & target & status & $L^\ast$ & TTS99($L^\ast$) (s) & TTS99(500) (s) \\
\midrule
G11 & 800 & 564 & C & 56 & 0.00728 [0.00725, 0.00732] & 0.191 \\
G12 & 800 & 556 & B & 56 & 0.00865 [0.00705, 0.0103] & 0.185 \\
G13 & 800 & 582 & B & 64 & 0.0104 [0.0103, 0.0108] & 0.184 \\
G32 & \num{2000} & \num{1410} & B & 72 & 0.0298 [0.0297, 0.03] & 0.409 \\
G33 & \num{2000} & \num{1382} & B & 80 & 0.0376 [0.0374, 0.039] & 0.416 \\
G34 & \num{2000} & \num{1384} & B & 72 & 0.0317 [0.0302, 0.039] & 0.416 \\
G48 & \num{3000} & \num{6000} & C & 56 & 0.0153 [0.015, 0.0157] & 0.253 \\
G49 & \num{3000} & \num{6000} & C & 56 & 0.0153 [0.015, 0.0157] & 0.240 \\
G50 & \num{3000} & \num{5880} & B & 88 & 0.0548 [0.0527, 0.0676] & 0.489 \\
G57 & \num{5000} & \num{3494} & C & 144 & 0.488 [0.418, 0.57] & 0.976 \\
G62 & \num{7000} & \num{4872} & C & \GsixtytwoLstar & \GsixtytwoTTS\ [\GsixtytwoTTSlo, \GsixtytwoTTShi] & 2.51 \\
G65 & \num{8000} & \num{5562} & C & 160 & 0.537 [0.418, 0.639] & 1.53 \\
G66 & \num{9000} & \num{6364} & B & 168 & 0.485 [0.483, 0.504] & 1.72 \\
G67 & \num{10000} & \num{6950} & B & 232 & 0.831 [0.8, 1.03] & 1.90 \\
G72 & \num{10000} & \num{7008} & C & \GseventytwoLstar & \GseventytwoTTS\ [\GseventytwoTTSlo, \GseventytwoTTShi] & 3.25 \\
G77 & \num{14000} & \num{9940} & C & \GseventysevenLstar & \GseventysevenTTS\ [\GseventysevenTTSlo, \GseventysevenTTShi] & 2.45 \\
G81 & \num{20000} & \num{\GeightyoneOpt} & C & \GeightyoneLstar & \num[round-mode=places,round-precision=1]{\GeightyoneTTS} [\num[round-mode=places,round-precision=1]{\GeightyoneTTSlo}, \num[round-mode=places,round-precision=1]{\GeightyoneTTShi}] & 13.3 \\
\bottomrule
\end{tabular}
\end{table}
For the eight graphs marked B, the upper bounds of Ref.~\cite{glass2026maximum} are 558 (G12), 583 (G13), \num{1414} (G32), \num{1384} (G33), \num{1386} (G34), \num{6000} (G50), \num{6366} (G66) and \num{6952} (G67).

\end{document}